\documentclass[12pt, article]{amsart} 
\usepackage{setspace}
\usepackage{amsfonts}
\usepackage[shortlabels]{enumitem}
\usepackage{amsthm}
\usepackage{amsmath}
\usepackage{amssymb} %
\usepackage{slashed}
\usepackage{amscd}
\usepackage[latin2]{inputenc}
\usepackage{t1enc}
\usepackage[mathscr]{eucal}
\usepackage{indentfirst}
\usepackage{graphicx}
\usepackage{graphics}
\usepackage{pict2e}
\usepackage{epic}
\numberwithin{equation}{section}
\usepackage[margin=3.2cm]{geometry}
\usepackage{epstopdf} 

\usepackage{tikz-cd}
\usepackage{dsfont}
\usetikzlibrary{matrix}

\newcommand\CC{{\mathbb C}} 
\newcommand\CP{{\mathbb C}\mathbb{P}} 
\newcommand\QQ{{\mathbb Q}} 
 


\newcommand{\fd}{\mathop{\mathrm{fd}}} 



\allowdisplaybreaks 

\theoremstyle{plain}
\newtheorem{theorem}{Theorem}[section]
\newtheorem*{theorem*}{Theorem}
\newtheorem{lemma}[theorem]{Lemma}

\newtheorem{proposition}[theorem]{Proposition}

 \theoremstyle{definition}

\newtheorem{remark}[theorem]{Remark}
\newtheorem{?}[theorem]{Problem}

\newtheorem*{Acknowledgments*}{Acknowledgments}

\newcommand{\im}{\operatorname{im}}

\begin{document}

\title{Verifying the Hilali Conjecture up to formal dimension twenty}

\author{Spencer Cattalani}
\author{Aleksandar Milivojevi\'{c}}
\address{Department of Mathematics, Stony Brook University, NY 11794}\email{spencer.cattalani@stonybrook.edu}\urladdr{}\email{aleksandar.milivojevic@stonybrook.edu}\urladdr{}

\begin{abstract} We prove that in formal dimension $\leq 20$ the Hilali conjecture holds, i.e. that the total dimension of the rational homology bounds from above the total dimension of the rational homotopy for a simply connected rationally elliptic space. \end{abstract}

\maketitle

\section{Introduction}

The Hilali conjecture [HM08a] in rational homotopy theory states that for a minimal commutative differential graded algebra over the rationals $(\Lambda V, d)$ with $V^1 = 0$ whose cohomology $H^*(\Lambda V, d) = \bigoplus_i H^i(\Lambda V, d)$ and space of indecomposables $V$ are both finite--dimensional, we have $H^*(\Lambda V, d) \geq \dim V$. Translated into a geometric statement, this says that the total dimension of the rational cohomology of a simply connected space bounds the total dimension of the rational homotopy from above if the latter quantity is finite.

Simply connected spaces with such minimal models, called rationally elliptic spaces, are known to satisfy very restrictive topological conditions. For such a space $X$, the topological Euler characteristic is non-negative and the homotopy Euler characteristic $\sum_i (-1)^i \pi_i(X)\otimes \QQ$ is non-positive; furthermore, one is non-zero if and only if the other is zero [FHT, Prop. 32.10]. Such spaces are akin to closed manifolds, as they satisfy a Poincar\'e duality on their rational cohomology [FHT Prop. 38.3]: $H^*(X;\QQ) \cong H^{n-*}(X;\QQ)$, where $n$ is the formal dimension $\fd(X)$ of $X$, i.e. the largest index for which the rational cohomology does not vanish. In fact, if the homotopy Euler characteristic of $X$ is negative, one can find a simply connected closed smooth manifold $M$ and a rational homotopy equivalence $M \to X$ by the Barge--Sullivan theorem [FrH79, p.124].

Friedlander and Halperin [FrH79] identified the condition under which a set of integers occurs as the degrees of a homogeneous basis of $\pi_*(X)\otimes \QQ$ of a rationally elliptic space $X$. Namely, the sequence $(2a_1, \ldots, 2a_r : 2b_1 - 1, \ldots, 2b_q -1)$ denotes the degrees of a homogeneous basis of $\pi_*(X)\otimes \QQ$ of some elliptic $X$ if and only if the following \textit{strong arithmetic condition} is satisfied: for every subsequence $A^*$ of $(a_1, \ldots, a_r)$ of length $s$, at least $s$ many elements $b_j$ in $(b_1, \ldots, b_q)$ can be written as $b_j = \sum_{a_i \in A^*} \gamma_{ij}a_i$, where the $\gamma_{ij}$ are non-negative integers whose sum for any fixed $j$ is at least two. Call such a sequence a \textit{homotopy rank type}; note that the homotopy rank type does not uniquely determine the space $X$ up to rational homotopy equivalence, even amongst elliptic spaces.

Using this characterization, Nakamura and Yamaguchi [NaYa11] wrote a \texttt{C++} program to output all the homotopy rank types of simply connected elliptic spaces up to a given formal dimension. In the present paper, after establishing some preliminary results, we will verify the Hilali conjecture up to formal dimension 20 by employing our results into the code of [NaYa11] to significantly reduce the number of homotopy rank types that need to be considered manually. In [HM08b], the conjecture is claimed to be verified up to formal dimension 10; in [NaYa11] this claim was pushed to formal dimension 16. However, the tables of homotopy rank types in [HM08b] are slightly incomplete (for example the homotopy rank type $(2:11)$ corresponding to $\CP^5$ is not present in Table 1 therein), and the current authors failed to understand how an inequality in the proof of the crucial Proposition 4.3 in the latter article was obtained. We hence reverify the conjecture in these dimensions carefully and extend the verification up to dimension 20. In the next section the reader may see how the number of homotopy rank types increases considerably with the formal dimension.

Throughout, $(\Lambda V, d)$ will denote a minimal commutative differential graded algebra modelling a given space $X$; $V^k$ will denote the degree $k$ elements of the space of indecomposables $V$, and $(\Lambda V)^k$ the degree $k$ elements in the algebra. Likewise $\Lambda V^{\leq m}$ will denote the subalgebra of $\Lambda V$ generated by the elements of degree at most $m$, and $(\Lambda V^{\leq m})^k$ will denote the vector space of degree $k$ elements in this subalgebra. For ease of notation we will denote by $H^*$ the total cohomology $\bigoplus_i H^i(\Lambda V, d)$. 

\begin{Acknowledgments*}

The authors would like to acknowledge the support of the Directed Reading Program at Stony Brook University, under which this project was initiated. Several computations were carried out with the Commutative Differential Graded Algebras module for SageMath [Sage] written by Miguel Marco and John Palmieri, for which the authors are duly grateful. We thank the referee for their help in streamlining the exposition.

\end{Acknowledgments*}

\section{Verification in dimension $\leq 20$}

We now collect some general statements and ad hoc arguments which we will implement into the code found in [NaYa11] in order to reduce the verification of the Hilali conjecture in formal dimension $\leq 20$ to several cases, which we will then rule out by hand. Following the notation of [NaYa11], homotopy rank types will be denoted by $(2a_1, \ldots, 2a_n : 2b_1 - 1, \ldots, 2b_{n+p}-1)$, where the sequences $a_i$ and $b_i$ are (not necessarily strictly) increasing. Note that $-p$ equals the homotopy Euler characteristic of any space $X$ realizing the given homotopy rank type. 

\begin{proposition} If $p=0$, then the Hilali conjecture holds. \end{proposition}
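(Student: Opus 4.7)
The plan is to reduce the statement to an arithmetic inequality on the rank-type sequence. First, since $p = 0$ forces the realizing space $X$ to have positive Euler characteristic, the standard structure theory for such ``$F_0$-spaces'' identifies the rational cohomology with a complete intersection $\QQ[x_1,\ldots,x_n]/(f_1,\ldots,f_n)$ in which $|x_i| = 2a_i$ and $(f_1,\ldots,f_n)$ is a regular sequence of homogeneous elements of degrees $|f_j| = 2b_j$. Its Poincar\'e polynomial therefore factors as
\[
P_X(t) \;=\; \frac{\prod_{j=1}^n (1 - t^{2b_j})}{\prod_{i=1}^n (1 - t^{2a_i})},
\]
and cancelling the $n$ common factors of $(1 - t)$ before setting $t = 1$ yields
\[
\dim H^* \;=\; \chi(X) \;=\; \frac{\prod_j b_j}{\prod_i a_i}.
\]
Since $V$ has exactly $n$ even-degree and $n$ odd-degree generators, $\dim V = 2n$, so the Hilali bound reduces to $\prod_j b_j / \prod_i a_i \geq 2n$.

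The main step will be to show that, after sorting both sequences increasingly as $a_1 \leq \cdots \leq a_n$ and $b_1 \leq \cdots \leq b_n$, one has $b_s \geq 2 a_s$ for every $s$. To establish this I would fix $s$ and apply the strong arithmetic condition to the upper-tail subsequence $A^* = (a_s, a_{s+1}, \ldots, a_n)$ of length $n - s + 1$; this produces at least $n - s + 1$ indices $j$ for which $b_j = \sum_{i \geq s} \gamma_{ij} a_i$, with non-negative integer coefficients summing to at least two. Since $a_i \geq a_s$ for every $i \geq s$, each such $b_j$ satisfies $b_j \geq 2 a_s$, so at most $s - 1$ of the $b_j$'s can be strictly less than $2 a_s$. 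In the sorted listing this forces $b_s \geq 2 a_s$, as desired.

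Combining the two ingredients gives
\[
\dim H^* \;=\; \prod_{i=1}^n \frac{b_i}{a_i} \;\geq\; 2^n,
\]
and the elementary inequality $2^n \geq 2n$ for all $n \geq 1$ (with equality only at $n = 1, 2$, matching the extremal examples given by even-dimensional spheres and products of two such) yields $\dim H^* \geq 2n = \dim V$; the case $n = 0$ is vacuous. I do not anticipate a serious obstacle: the $F_0$ Poincar\'e polynomial formula is classical and citable, while the sorting step is a short Hall-type consequence of the strong arithmetic condition applied to the upper tails of the sorted $a$-sequence. The delicate point, if any, is simply making sure that $(a_s,\ldots,a_n)$ is treated as a bona fide subsequence to which the strong arithmetic condition applies, which it is.
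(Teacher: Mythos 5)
Your proof is correct, but it takes a genuinely different route from the paper's. The paper disposes of $p=0$ in two lines: $\chi_\pi=0$ forces $\chi>0$, hence the space admits a pure minimal model, and the conjecture for pure elliptic spaces is then simply quoted from [BFMM14, Section 3]. You instead give a self-contained argument via the classical $F_0$ structure theory: with $\chi>0$ the cohomology is a complete intersection $\QQ[x_1,\ldots,x_n]/(f_1,\ldots,f_n)$ with $|f_j|=2b_j$ a regular sequence, so the Hilbert series factors and $\dim H^*=\prod_j b_j/\prod_i a_i$; then the strong arithmetic condition applied to each upper tail $(a_s,\ldots,a_n)$ shows that at least $n-s+1$ of the $b_j$ are $\geq 2a_s$, hence at most $s-1$ are smaller, hence $b_s\geq 2a_s$ in sorted order, giving $\dim H^*\geq 2^n\geq 2n=\dim V$. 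All the steps check out: the counting argument for $b_s\geq 2a_s$ is valid, and the limit $t\to 1$ of the factored Poincar\'e polynomial indeed gives $\prod_j b_j/\prod_i a_i$. What your approach buys is an explicit and strictly stronger bound $\dim H^*\geq 2^n$ together with independence from [BFMM14]; what the paper's citation buys is brevity and the fact that the quoted result covers the larger class of pure (not necessarily positive-Euler-characteristic) elliptic spaces. The only thing you should make precise is the first step: the identification of $H^*$ with a complete intersection for $\chi>0$ is not automatic from purity alone but is the standard Koszul-complex argument (finite-dimensionality of the quotient forces the $dy_j$ to be a regular sequence, which kills the higher Koszul homology); this is classical and citable, e.g.\ from [FHT, \S 32] or [FrH79], so it is a matter of attribution rather than a gap.
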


\begin{proof}  The vanishing of the homotopy Euler characteristic $\chi_\pi$ implies that the Euler characteristic of any such space is positive. This now implies the space admits a pure minimal model (the existence of a pure model is stated in [FHT Prop. 32.10], and minimality of this model can be seen from the proof therein), and so by [BFMM14 Section 3] the conjecture holds. \end{proof}

\begin{remark} In the lemmas to follow we will rely on the existence of elements of $V$ in degree strictly smaller than half the formal dimension. We thus verify now that the Hilali conjecture holds for simply connected spaces $X$ of formal dimension $n$ for which $b_1, \ldots, b_{\left \lceil{\frac{n}{2}}\right \rceil - 1} = 0$. If the formal dimension is odd or if $b_{\tfrac{n}{2}} = 0$, then by Poincar\'e duality $X$ is rationally homotopy equivalent to a sphere, for which the conjecture holds. If the formal dimension is even, $n=2k$,  and $\dim V_{k} = 1$, then $X$ has minimal model $\Lambda(x_k, y_{3k-1})$ with $dx = 0$, $dy = x^3$, and so the conjecture holds. If $\dim V_k = 2$, the space $X$ will admit a minimal model over the complex numbers of the form $\Lambda(x_k, x'_k, y_{2k-1}, y'_{2k-1})$ with $dx = dx' = 0$ and $dy = x^2 - x'^2$, $dy' = xx'$ (tensoring with the complex numbers has the advantage of making the nondegenerate pairing in the middle degree cohomology equivalent to the pairing represented by the identity matrix). We see that $\dim H^*(X;\CC) = 4$ and $\dim \pi_*(X) \otimes \CC = 4$; since these dimensions are independent of the choice of coefficient field of characteristic zero, the conjecture is verified. In the case of $\dim V_k \geq 3$, one can build the minimal model over the complex numbers (again to simplify the intersection pairing) and see that one must introduce at least two generators in degrees $>n$, showing that this space is not elliptic [FHT p.441] (cf. with the rational hyperbolicity of $\#_{i=1}^k \CP^2$ for $k\geq 3$). Alternatively, any rational Poincar\'e duality space with $b_1, \ldots, b_{\left \lceil{\frac{n}{2}}\right \rceil - 1}= 0$ is formal by [Mi79] and hence satisfies the Hilali conjecture by [HiMa08a, Theorem 2] if it is rationally elliptic. \end{remark}

\begin{lemma} Let $X$ be a simply connected rationally elliptic space with $p>0$. Suppose the smallest degree in which $\pi_*(X) \otimes \QQ$ is nonzero is strictly less than $\tfrac{\fd(X)}{2}$, and denote the dimension of this space by $k$. If $\fd(X)$ is odd, then $\dim H^*(X;\QQ) \geq 2k+2$. If $\fd(X)$ is even, and the smallest degree in which $\pi_*(X) \otimes \QQ$ is nonzero is odd, then $\dim H^*(X;\QQ) \geq 4k$. Otherwise, if the smallest nonzero degree of $\pi_*(X)\otimes \QQ$ is even, we have $\dim H^*(X;\QQ) \geq 4k+4$. \end{lemma}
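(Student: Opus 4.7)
The plan is to combine three ingredients: the elements of $V^m$ (where $m$ denotes the smallest nonzero degree of $\pi_*(X)\otimes\QQ$) are automatically cocycles for purely degree reasons, Poincar\'e duality on $H^*(X;\QQ)$, and the fact that $p>0$ forces the topological Euler characteristic $\chi(X)$ to vanish (via [FHT Prop.~32.10] as recalled in the introduction).

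First I would show that $H^m(X;\QQ)$ and $H^{n-m}(X;\QQ)$ each have dimension at least $k$, where $n = \fd(X)$. By minimality, for $v \in V^m$ the differential $dv$ is decomposable in $\Lambda V$; since the lowest generator is in degree $m \geq 2$, any decomposable element has degree at least $2m > m+1$, so $dv = 0$ and every element of $V^m$ is closed. The same estimate gives $(\Lambda V)^{j} = 0$ for $j < m$ and $(\Lambda V)^m = V^m$, so in fact $H^m \cong V^m$ and this has dimension exactly $k$. Poincar\'e duality then gives $\dim H^{n-m} = k$ as well, and the hypothesis $m < n/2$ together with $m \geq 2$ ensures that the four degrees $0, m, n-m, n$ are pairwise distinct, producing a combined $2k+2$ classes.

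I would then split into the three cases by parity. In Case~1 ($n$ odd) the bound $\dim H^* \geq 2k+2$ is already achieved, with no use of the hypothesis $p>0$. In Cases~2 and~3, $n$ is even, and $p>0$ gives $\chi_\pi(X) = -p < 0$, hence $\chi(X)=0$, so $\dim H^{\mathrm{even}}(X;\QQ) = \dim H^{\mathrm{odd}}(X;\QQ)$. In Case~2 ($m$ odd), the $k$ classes in $H^m$ and the $k$ in $H^{n-m}$ all sit in odd degrees, giving $\dim H^{\mathrm{odd}} \geq 2k$ and hence $\dim H^* \geq 4k$. In Case~3 ($m$ even) the classes in $H^0$, $H^m$, $H^{n-m}$, and $H^n$ all sit in even degrees, giving $\dim H^{\mathrm{even}} \geq 2k+2$ and hence $\dim H^* \geq 4k+4$.

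There is no substantive obstacle here: the proof reduces to a parity count once one observes that $V^m$ consists entirely of cocycles and that $\chi(X)=0$. The only care needed is the bookkeeping verifying that $0, m, n-m, n$ really are four distinct degrees, which uses the strict inequality $m < \fd(X)/2$ and the fact that $X$ is simply connected so $m \geq 2$.
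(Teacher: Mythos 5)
Your proposal is correct and follows essentially the same route as the paper's proof: degree reasons force $V^m$ to consist of cocycles representing nontrivial classes, Poincar\'e duality doubles the count, and $p>0$ forces $\chi(X)=0$ so that even and odd Betti numbers balance in the two even-dimensional cases. The only difference is that you spell out the bookkeeping (distinctness of the degrees $0,m,n-m,n$) slightly more explicitly.
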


\begin{proof} Note that every element in the smallest nonzero degree of $\pi_*(X)\otimes \QQ$ corresponds to a closed, non-exact element in the minimal model of $X$ for degree reasons. The first statement now follows from $\dim H^0(X;\QQ) = 1$ and Poincar\'e duality. If the formal dimension of $X$ is even, and the smallest nonzero degree of $\pi_*(X)\otimes \QQ$ is odd, Poincar\'e duality ensures $2k$ independent cohomology classes of odd degree in $X$. Since $p\neq 0$, the Euler characteristic of $X$ is zero, providing us with another $2k$ independent cohomology classes, of even degree. If $\fd(X)$ is even and the smallest nonzero degree of $\pi_*(X)\otimes \QQ$ is even, then Poincar\'e duality gives us at least $2k+2$ independent cohomology classes in even degree, since $\dim H^0(X;\QQ) = 1$. The vanishing of the Euler characteristic then provides another $2k+2$ independent cohomology classes, now of odd degree. \end{proof}

\begin{lemma} Let $X$ be a simply connected rationally elliptic space with $p>0$. Suppose the smallest degree $d$ in which $\pi_*(X)\otimes \QQ$ is nonzero is even, and denote the dimension of this space by $k$. Suppose further that the second smallest nonzero degree of $\pi_*(X)\otimes \QQ$ is $2d-1$, of dimension $l$, with $2d-1 < \tfrac{\fd(X)}{2} - 1$. Then if $\fd(X)$ is even and $\binom{k+1}{2} \geq l$, we have $\dim H^* \geq 4(1+k + \binom{k+1}{2} - l)$; if $\binom{k+1}{2} < l$, then $\dim H^* \geq 4 \, \mathrm{max}(l - \binom{k+1}{2}, 1+k)$. If $\fd(X)$ is odd, then in either case $\dim H^* \geq 2(1+k+\left| l - \binom{k+1}{2} \right |)$. \end{lemma}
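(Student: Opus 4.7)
The plan is to analyze the structure of the minimal model $(\Lambda V,d)$ in degrees $d$ through $2d$, and to translate this into a lower bound on $\dim H^*$ via Poincar\'e duality and the vanishing of the Euler characteristic afforded by $p>0$. Let $x_1,\ldots,x_k$ be a basis of $V^d$ and $y_1,\ldots,y_l$ a basis of $V^{2d-1}$. Minimality forces $dx_i\in\Lambda^{\geq 2}V$, and since every generator has degree $\geq d\geq 2$, no decomposable of degree $d+1$ exists; hence $dx_i=0$, so the $\binom{k+1}{2}$ symmetric products $x_ix_j$ for $1\leq i\leq j\leq k$ are linearly independent closed elements of $(\Lambda V)^{2d}$. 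A similar degree count---using that no generator lies strictly between degrees $d$ and $2d-1$---shows $dy_j\in\mathrm{Sym}^2 V^d$, yielding a linear map
\[ \phi\colon V^{2d-1}\to\mathrm{Sym}^2 V^d,\qquad \phi(y)=dy. \]

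Next I would extract cohomological lower bounds from $\phi$. Since $(\Lambda V)^{2d-1}=V^{2d-1}$ for degree reasons, the image of $d$ in degree $2d$ is exactly $\mathrm{im}\,\phi$, so $\dim H^{2d}\geq\binom{k+1}{2}-\mathrm{rank}(\phi)$. Any element of $\ker\phi$ is closed in degree $2d-1$ and non-exact, since $(\Lambda V)^{2d-2}$ is either zero (when $d\geq 3$) or equals $V^d$ (when $d=2$), on which $d$ vanishes; this gives $\dim H^{2d-1}\geq l-\mathrm{rank}(\phi)$. Writing $r=\mathrm{rank}(\phi)\leq\min(l,\binom{k+1}{2})$, the low-degree cohomology contributes at least $1+k+(\binom{k+1}{2}-r)$ classes in the even degrees $0,d,2d$ and at least $l-r$ classes in the odd degree $2d-1$.

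Finally, the hypothesis $2d-1<\fd(X)/2-1$ gives $2d<\fd(X)/2$, so the four low degrees have distinct Poincar\'e duality partners in the upper half. For $\fd(X)$ odd, PD pairs even with odd degrees, so $\dim H^*$ equals twice the low-degree total $1+k+(\binom{k+1}{2}-r)+(l-r)$; the worst case for our bound is $r=\min(l,\binom{k+1}{2})$, yielding $2(1+k+|l-\binom{k+1}{2}|)$. For $\fd(X)$ even, PD preserves parity, and the vanishing of $\chi(X)$ (which follows from $p>0$ by [FHT, Prop.~32.10]) forces $\dim H^{\mathrm{even}}=\dim H^{\mathrm{odd}}\geq 2\max(1+k+(\binom{k+1}{2}-r),\,l-r)$, whence $\dim H^*\geq 4\max$; splitting on whether $l\leq\binom{k+1}{2}$ or $l>\binom{k+1}{2}$ and taking the worst-case $r$ in each recovers the two stated formulas. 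The main obstacle is the degree-counting in the first paragraph---ensuring $dy_j\in\mathrm{Sym}^2 V^d$ and that $\ker\phi$ consists of non-exact classes---after which the rest is parity bookkeeping.
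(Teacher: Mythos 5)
Your argument is correct and follows essentially the same route as the paper: identify the $\binom{k+1}{2}$-dimensional space of closed elements $\mathrm{Sym}^2 V^d = (\Lambda V^{\leq d})^{2d}$, observe that the image of $d$ there has rank at most $l$ (your map $\phi$), and then combine the resulting counts in degrees $0$, $d$, $2d-1$, $2d$ with Poincar\'e duality and the vanishing of the Euler characteristic. You simply make explicit the bookkeeping (the role of $\ker\phi$ in degree $2d-1$ and the worst-case choice of $\mathrm{rank}(\phi)$) that the paper compresses into "as in Lemma 2.3."
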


\begin{proof} We note that $(\Lambda V^{\leq d})^{2d}$ has dimension $k + \binom{k}{2}$ (spanned by squares of a basis of generators in degree $d$ and products of two distinct generators). These elements are closed, and the dimension of the image of $d$ in this space is bounded by $l$. Now the lemma follows by combining this with $\dim H^0(X;\QQ) = 1$, Poincar\'e duality, and $\chi(X) = 0$ as in Lemma 2.3.\end{proof}

\begin{remark} In the above Lemma 2.4, if the second smallest nonzero degree of rational homotopy is odd and strictly less than $2d-1$, then the corresponding elements in the minimal model are closed and non-exact, and so by Poincar\'e duality we have $\dim H^*(X;\QQ) \geq 2(1+k+l)$. If the degree is strictly greater than $2d-1$, then the inequalities in the statement of the Lemma hold with $l=0$. \end{remark}

\begin{lemma}  Let $X$ be a simply connected rationally elliptic space. Suppose the smallest degree $d$ in which $\pi_*(X)\otimes \QQ$ is nonzero is even, and denote the dimension of this space by $k$. Suppose further that the second smallest nonzero degree of $\pi_*(X)\otimes \QQ$ is $2d-1$. Denote $l = \dim \pi_{2d-1}(X) \otimes \QQ$ and $m = \dim \pi_{3d-2}(X)\otimes \QQ$. If $3d-1 < \tfrac{\fd(X)}{2}$, then $$\dim H^*(X;\QQ) \geq 2(1+k+ \left| l - \binom{k+1}{2} \right | + \mathrm{max}(0, kl-k^2 - \binom{k}{3} - m)).$$\end{lemma}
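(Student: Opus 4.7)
The plan is to extend the argument of Lemma 2.4 by establishing a new lower bound on $\dim H^{3d-1}(X;\QQ)$, then doubling via Poincar\'e duality. Since $d$ is even with $d \geq 2$, the degrees $0, d, 2d-1, 2d, 3d-1$ are pairwise distinct, and all lie strictly below $\fd(X)/2$ by the hypothesis $3d-1 < \fd(X)/2$. Hence independent cocycles in these degrees together with their Poincar\'e duals in the complementary strictly higher degrees remain independent, accounting for the factor of $2$ in the claimed bound.

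The new input is the estimate $\dim H^{3d-1}(X;\QQ) \geq \max(0, kl - k^2 - \binom{k}{3} - m)$, which I would establish in three substeps. First, a degree count identifies $(\Lambda V^{\leq 2d-1})^{3d-1}$: since the only generator degrees less than $2d-1$ are those in $V^d$, and triple products of $V^d$ have degree $3d$, the only admissible monomials of total degree $3d-1$ are the products $x_i y_j$ with $x_1, \ldots, x_k$ spanning $V^d$ and $y_1, \ldots, y_l$ spanning $V^{2d-1}$; thus this space has dimension $kl$. Second, the differential restricts to $d(x_i y_j) = x_i \cdot dy_j$, whose image lies in $(\Lambda V^{\leq d})^{3d}$, a polynomial space of dimension $\binom{k+2}{3} = k^2 + \binom{k}{3}$ (by Pascal's identity); hence the cocycle subspace in $\text{span}\{x_i y_j\}$ has dimension at least $kl - k^2 - \binom{k}{3}$. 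Third, coboundaries in $(\Lambda V)^{3d-1}$ come only from $d(V^{3d-2})$ and land in $\text{span}\{x_i y_j\}$: for $d \geq 3$ the space $(\Lambda V)^{3d-2}$ is just $V^{3d-2}$, while for $d = 2$ any additional summand consists of products $x_i x_j$, which are closed. Hence the boundaries have dimension at most $m$.

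Combining this with contributions from $H^0$ (dimension $1$), $H^d$ (dimension $\geq k$ from the closed non-exact generators $x_i$), and $H^{2d-1} \oplus H^{2d}$ (total dimension $\geq |l - \binom{k+1}{2}|$, which is exactly the portion of Lemma 2.4's proof not using the $\chi(X) = 0$ step, so the absence of a $p > 0$ hypothesis here is not an obstruction) and applying Poincar\'e duality yields the claimed bound.

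The main obstacle will be the degree-enumeration step, specifically ruling out contributions to $(\Lambda V)^{3d-2}$ and $(\Lambda V)^{3d-1}$ from possible generators in the intermediate degrees $2d, 2d+1, \ldots, 3d-3$ which the hypotheses do not forbid. For any such intermediate generator $z$ of degree $e$ with $2d-1 < e < 3d-2$, a product $z \cdot w$ with $w$ another generator of degree at least $d$ has total degree $e + d > (2d-1) + d = 3d-1$, so no additional monomials land in our target degrees.
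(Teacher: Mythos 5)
Your proof is correct and follows essentially the same route as the paper's: you identify the $kl$-dimensional space spanned by the products $x_iy_j$, observe its image under $d$ lies in the $\bigl(k^2+\binom{k}{3}\bigr)$-dimensional space $(\Lambda V^{\leq d})^{3d}$, bound the coboundaries in degree $3d-1$ by $m$ via $(\Lambda V)^{3d-2}$, and combine with the contributions from Lemma 2.4 and Poincar\'e duality. Your explicit enumeration ruling out monomials from possible intermediate-degree generators, and your remark that the doubling here uses only Poincar\'e duality (not $\chi(X)=0$, consistent with the absence of a $p>0$ hypothesis), are welcome clarifications of details the paper leaves implicit.
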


\begin{proof} Note that $\dim (\Lambda V^{\leq d})^{3d} = k^2 + \binom{k}{3}$. In $(\Lambda V)^{3d-1}$, there is a $kl$ dimensional subspace $W$ spanned by products of degree $d$ generators and degree $2d-1$ generators. The image of $d$ applied to this subspace $W$ lies in $(\Lambda V^{\leq d})^{3d}$. Since $W$ is spanned by quadratic elements, an element in it is exact only if it is in the image of the differential applied to the $m$--dimensional $V^{3d-2}$. Hence we have at least $\mathrm{max}(0, kl-k^2 - \binom{k}{3} - m)$ independent cohomology classes in degree $3d-1$. Combining this with the degree 0 class, the $k$-dimensional cohomology we obtain in degree $d$, and the $\left| l - \binom{k+1}{2} \right|$-dimensional cohomology in degree $2d-1$ or $2d$ as in Lemma 2.4, along with Poincar\'e duality, we obtain the desired bound. \end{proof}

\begin{remark} Note that if the smallest nonzero degree $d$ of $\pi_*(X)\otimes \QQ$ is odd, of dimension $l$, and the smallest nonzero even degree $d'$ of $\pi_*(X)\otimes \QQ$ is strictly less than $3d-1$, of dimension $s$, then these two vector spaces must correspond to closed non-exact elements in the minimal model of $X$ for degree reasons. Indeed, the differential applied to a generator in the smallest even degree would have to land in the subalgebra of odd degree elements, producing a polynomial all of whose monomials are at least cubic and hence of degree at least $3d$. If furthermore we denote $m = \dim \pi_{2d-1}(X) \otimes \QQ$, we have an additional $\left| \binom{l}{2} - m \right|$ independent cohomology classes in degree $2d-1$ or $2d$. Indeed, the differential applied to a degree $2d-1$ generator must land in the subspace of quadratic polynomials in the degree $d$ generators for degree reasons, which is of dimension $\binom{l}{2}$. If $2d$ and $d'$ are both strictly less than $\tfrac{\fd(X)}{2}$, then Poincar\'e duality gives us $\dim H^*(X;\QQ) \geq 2(1 + l + s + \left| \binom{l}{2} - m \right|)$. 
\end{remark}

\begin{remark} Three more quick observations that will rule out several homotopy rank types each are the following: \begin{enumerate} \item Every even generator whose degree is smaller than the lowest degree among odd generators is closed and non-exact; likewise all products of such generators (for our purposes we will only need squares) whose total degree is smaller than the lowest odd degree are closed and non-exact. \item A homotopy rank type in $\fd \geq 9$ of the form $(2,a: 3, b, c)$, where $\fd - 2 > a \geq 4$ and $\fd - 2 >  b \geq 5$, satisfies the conjecture. Let $(x,x',y,z,z')$ be generators of the corresponding degrees. Note that $x$ is closed and non-exact, and so by Poincar\'e duality, since $\dim H^0 = 1$, we have $\dim H^* \geq 4$. If we find one more independent cohomology class the conjecture is verified. If $a < b$, then $dx' = \alpha x^k y$ for some $\alpha \in \QQ$, $k \geq 1$. Now, either $y$ is closed and we are done, or $dy = \beta x^2$ for some $\beta \neq 0$; however, this would mean $dx'$ is not closed, which cannot be. If $b< a$, then $dz = \alpha x^k$ for some $\alpha \in \QQ$, $k\geq 3$. Either $y$ or $z$ is closed and we are done, or $z$ plus a multiple of $x^{k-2}y$ is closed and necessarily non-exact by minimality.  \item A homotopy rank type of the form $(2,4,a:3,3,b,c)$, where $a \geq 4$ and $b \geq 7$  in $\fd \geq 13$ satisfies the conjecture. Indeed, let $(x,z,u,y,y',v,v')$ be generators of the corresponding degrees. If $dy = dy' = 0$, we have $\dim H^* \geq 8$, so we may assume upon a change of basis that $dy = x^2$ and $dy' = 0$. Then the kernel of $d$ in degree 5 is spanned by $xy'$. If $a=4$, we see there must be a non-zero closed degree 4 generator, and $\dim H^* \geq 8$. Otherwise, if $a \geq 6$, we may assume $dz = xy'$. Then the Massey product $[xz - yy']$ is non-zero and $\dim H^* \geq 8$. 
\end{enumerate} \end{remark}

We now list the homotopy rank types remaining upon implementation of the above observations into the code of [NaYa11], and for illustration include the total number of homotopy rank types in a given formal dimension. Recall that we adopt the convention that we list the subsequences of even and odd numbers in ascending order in a given homotopy rank type. {\tiny \begin{align*} \fd \leq 14 &: \textrm{ total number of homotopy rank types }=229 \textrm{, all ruled out} \\ \fd=15 &:  \textrm{ number of homotopy rank types }=58 \\ p&=1 : (2,4,4 : 3,5,7,7), (2,2,4,4 : 3,3,3,7,7), \\ \fd=16 &:  \textrm{ number of homotopy rank types }=134 \textrm{, all ruled out}\\ \fd=17 &:  \textrm{ number of homotopy rank types }=103 \\ p&=1 : (2,4,4 : 3,7,7,7), (2,4,6 : 3,5,7,11), \\ &(2,2,4,4 : 3,3,5,7,7), (2,2,4,6 : 3,3,3,7,11), (2,4,4,4 : 3,3,7,7,7),  \\ p&=3 : (2 : 3,5,5,5), \\ \fd=18 &:  \textrm{ number of homotopy rank types }=217 \textrm{, all ruled out}\\ \fd=19 &:  \textrm{ number of homotopy rank types }=173 \\ p&=1 : (8,8 : 3,15,15), (2,4,4 : 3,5,7,11), (2,4,4 : 3,7,7,9), (2,4,6 : 3,5,9,11), \\ &(2,4,6 : 3,7,7,11), (2,4,8 : 3,5,7,15), (2,6,6 : 3,5,11,11), (4,6,6 : 3,7,11,11),  \\ &(2,2,4,4 : 3,3,3,7,11), (2,2,4,4 : 3,3,7,7,7), (2,2,4,6 : 3,3,3,9,11), \\  &(2,2,4,6 : 3,3,5,7,11), (2,2,4,8 : 3,3,3,7,15), (2,4,4,4 : 3,5,7,7,7), (2,4,4,6 : 3,3,7,7,11), \\ &(2,2,4,4,4 : 3,3,3,7,7,7), \\ p&=3 : (2 : 3,5,5,7), (2,4 : 3,3,5,5,7), \\ \fd=20 &:  \textrm{ number of homotopy rank types }=373 \\ p&=2 : (2,4,4,4 : 3,3,3,7,7,7).
\end{align*} }%

\begin{theorem} The Hilali conjecture holds in formal dimension $\leq 20$. \end{theorem}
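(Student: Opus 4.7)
The plan is to combine the general reductions already established with a case-by-case analysis of the handful of homotopy rank types surviving the automated filter. First, Proposition 2.1 disposes of all types with $p=0$, while Remark 2.2 disposes of those whose generators all lie in degrees $\geq \lceil \fd(X)/2 \rceil$. The lower bounds on $\dim H^*$ provided by Lemmas 2.3, 2.4, and 2.6, together with Remarks 2.5, 2.7, and 2.8, are then encoded into the Nakamura--Yamaguchi enumeration, producing the explicit short list above. The theorem is thus reduced to verifying the Hilali conjecture for the few homotopy rank types displayed, together with the empty lists in formal dimensions $\leq 14$, $16$, and $18$.

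For each remaining type, the strategy is uniform: label generators by their degrees; write the most general differential compatible with minimality, $d^2 = 0$, and the requirement that the resulting cohomology satisfy Poincar\'e duality in the prescribed formal dimension; and use the freedom of changing a basis of $V$ to reduce $d$ to a normal form. One then exhibits $\dim V$ many independent classes in $H^*$, drawing on four ingredients: forced closed, non-exact classes produced by low-degree generators; their Poincar\'e duals in degrees $> \fd(X)/2$; higher products or Massey products along the lines of Remark 2.8(3); and the vanishing of $\chi(X)$ when $p>0$. In favorable cases the normalized model can be fed into the SageMath CDGA module acknowledged above to read off $\dim H^*$ directly.

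The main obstacle is that for some of the longer types in $\fd = 19$ and $\fd = 20$, most notably $(2,2,4,4,4 : 3,3,3,7,7,7)$ and $(2,4,4,6 : 3,3,7,7,11)$, the space of admissible differentials has positive-dimensional moduli even after normalization, so one must check that the inequality $\dim H^* \geq \dim V$ persists uniformly over the moduli rather than generically. I would address this by stratifying the moduli by the vanishing loci of structure constants; on each stratum either a new forced closed class emerges whose existence depends on that vanishing, or a contradiction is derived with $d^2 = 0$ or with Poincar\'e duality in top degree $\fd(X)$, ruling that branch out as an elliptic model altogether. Carrying this out uniformly across the finitely many branches, and combining with the reductions of the previous paragraph, completes the verification.
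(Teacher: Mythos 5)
Your outline correctly identifies the structure of the argument --- reduce to a finite list via the general lemmas, then handle each surviving homotopy rank type by normalizing the differential, doing casework on which generators are closed, and detecting extra classes via Massey products when counting alone fails. This is indeed the paper's strategy. However, what you have written is a plan, not a proof: the entire mathematical content of this theorem lies in actually executing the case analysis for the roughly twenty-five surviving types, and you have not carried out a single one. The assertion that ``one then exhibits $\dim V$ many independent classes'' is precisely the thing to be proved in each case, and it is not routine. For example, for $(2,2,4,4:3,3,3,7,7)$ one must, after normalizing $dy=x^2$, $dy'=x'^2$, $dy''=xx'$, $dz=xy'-x'y''$, $dz'=x'y-xy''$, verify that the three Massey products $[y'y''+x'z]$, $[yy''+xz']$, $[yy'-xz+x'z']$ span $H^6$; and for $(2,2,4,6:3,3,5,7,11)$ one needs a genuine computation with the linear system forced by $d^2z=0$, split into several subcases according to which of the structure constants $a,b,c,a',b',c'$ vanish, to produce a closed element in the span of $\{yy',xz,x'z\}$. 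None of this is predictable from the homotopy rank type alone; each case requires its own argument, and in some cases (e.g.\ $(2,4,6:3,5,7,11)$) a dimension count in a specific degree such as $\dim H^8\geq 1$ is what saves the day, which your generic recipe does not locate.

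Two further cautions about the method as you describe it. First, feeding models into SageMath only works once the differential has been pinned down to finitely many explicit representatives; several of the surviving types have honest parameters in the differential even after all basis changes (the $a,b$ in $dx_5=ax_2x_4+bx_2^3$ for $(2,4,6:3,5,7,11)$, or the six constants in $(2,2,4,6:3,3,5,7,11)$), so the verification must be done symbolically, uniformly in the parameters --- which is what the paper does by hand. Second, your proposal to rule out some strata by ``a contradiction with Poincar\'e duality in top degree'' is not how these cases actually close: in every surviving case the paper produces enough cohomology rather than excluding the model, and it is not clear that any stratum can be excluded on duality grounds. So the gap is concrete: you have correctly described the scaffolding, but the theorem is the casework, and the casework is absent.
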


\begin{proof} We now deal with the remaining cases listed above. When counting arguments fail to rule out a given case, we instead detect Massey products to obtain the sought after amount of cohomology. Throughout, $(\Lambda V, d)$ will denote an arbitrary minimal cdga realizing a given homotopy rank type.


In formal dimension 15, we rule out $(2,4,4:3,5,7,7)$ by noting that if the generator in degree 3 is closed, we are done as the square of the degree 2 generator is then non-exact and so $\dim H^* \geq  8$. Otherwise, we have $\ker d \cap (\Lambda V^{\leq 3})^5 = \{0\}$, and so $\dim H^4 = 2$ and $\dim H^* \geq 8$. This also rules out $(2,4,4:3,7,7,7)$, $(2,4,4:3,5,7,11)$, and $(2,4,4:3,7,7,9)$.

The remaining homotopy rank type $(2,2,4,4:3,3,3,7,7)$ in dimension 15, along with $(2,2,4,4:3,3,3,7,11)$ in dimension 19, is verified as follows. If the kernel of $V^3 \overset{d}{\rightarrow} (\Lambda V)^4$ is non-trivial, then $\dim H^3 \geq 1$ and $\dim H^4 \geq 1$ (since $\dim V^3 = \dim (\Lambda V^{\leq 2})^4$) so we are done. If the kernel of $V^3 \overset{d}{\rightarrow} (\Lambda V)^4$ is trivial, we can choose bases $\{x,x'\}$, $\{y,y',y''\}$ of $V^2$ and $V^3$ respectively such that $dy = x^2$, $dy' = x'^2$, $dy'' = xx'$. Now $\ker d \cap (\Lambda V)^5$ is spanned by $xy' - x'y''$ and $x'y - xy''$. If $V^4 \overset{d}{\rightarrow} (\Lambda V)^5$ is not injective, then we are done as $\dim H^4 + \dim H^5 \geq 2$. If this $d$ is injective, we can choose a basis $\{z,z'\}$ of $V^4$ such that $dz = xy' - x'y''$, $dz' = x'y - xy''$.  We then have the Massey products $[y'y'' + x'z]$, $[yy'' + xz']$, $[yy' - xz + x'z']$ forming a basis for $H^6(\Lambda V, d)$. We can also rule out $(2,2,4,6:3,3,3,7,11)$, $(2,2,4,6:3,3,3,9,11)$, and $(2,2,4,8:3,3,3,7,15)$ with this argument by adapting the last two sentences: Let $\{z\}$ now be a basis for $V^4$; note $\dim H^5 \geq 1$. If $dz = xy'' - x'y$, then $[y'y'' - xz] \neq 0$ gives $\dim H^6 \geq 1$; if $dz = p(xy' - x'y'') + q(xy'' - x'y)$ for some non-zero $p \in \QQ$ and $q \in \QQ$, then $[-\tfrac{q}{p} yy' - \tfrac{q^2}{p^2} yy'' + y'y'' + \tfrac{q}{p^2} xz + \tfrac{1}{p} x'z] \neq 0$. In any case, $\dim H^* \geq 10$.


On to formal dimension 17, consider $(2,4,6:3,5,7,11)$: label the generator in degree $i$ by $x_i$. If $dx_3 = 0$, we are done; so suppose that, upon rescaling, we have $dx_3 = x_2^2$, and hence $x_4$ is closed. Now, $dx_5 = ax_2x_4 + bx_2^3$ for some $a,b \in \QQ$. We see that $\ker d \cap (\Lambda V^{\leq 5})^7$ is spanned by $x_2x_5 - ax_3x_4 - bx_2^2x_3$. (Note that this verifies $\dim H^* \geq 8$ for $(2,4,8:3,5,7,15)$ in dimension 19, since there this element containing a quadratic term cannot be exact as $V^6 = \{0\}$.) Now, either $x_6$ is closed and we have $\dim H^* \geq 8$, or upon rescaling $dx_6 = x_2x_5 - ax_3x_4 - bx_2^2x_3$. Since $d(x_2x_6) = d(x_3x_5) = x_2^2x_5 - ax_2x_3x_4 - bx_2^3x_3$, we have that $\ker d \cap (\Lambda V)^8$ is spanned by $\{x_2^4, x_2^2x_4, x_4^2, x_2x_6 - x_3x_5 \}$. The vector space $(\Lambda V)^7$ is spanned by $\{x_2^2x_3, x_3x_4, x_2x_5, x_7\}$, with the image of the differential on the first three vectors being two dimensional. We conclude that $\dim H^8 \geq 1$ and thus $\dim H^* \geq 8$. We draw the same conclusion for $(2,4,6:3,5,9,11)$ in dimension 19.

For the homotopy rank types $(2,2,4,4:3,3,5,7,7)$ and $(2,2,4,4:3,3,7,7,7)$, note that if $d$ is not injective on $V^3$, we are done as $\dim H^4 \geq 2$. If it is injective, then inspection of a matrix for $(\Lambda V^{\leq 3})^5 \overset{d}{\rightarrow} (\Lambda V^{\leq 2})^6$ yields $\dim \ker d \cap (\Lambda V^{\leq 3})^5 \leq 1$, and so $\dim \ker d \cap  V^4 \geq 1$, giving us $\dim H^4 \geq 2$.

For $(2,4,4,4:3,3,7,7,7)$, we have $\dim \ker d \cap V^3 \in \{1,2\}$. Since $\dim V^2 = 1$ we have $\dim \ker d \cap (\Lambda V)^5 = \dim \ker d \cap V^3$, and so $\dim H^4 \geq 3 - \dim \ker d \cap (\Lambda V)^5$, giving $\dim H^* \geq 2(2+ \dim \ker d \cap V^3 + (3-\dim \ker d \cap(\Lambda V)^5)) = 10$. As for $(2:3,5,5,5)$ and $(2:3,5,5,7)$, since $\dim (\Lambda V)^6 = 1$, there is a non-zero closed degree 5 indecomposable, yielding $\dim H^* \geq 6$.


Moving on to formal dimension 19, the homotopy rank type $(8,8:3,15,15)$ is ruled out by noting that the degree 8 generators must be closed. For $(2,4,6:3,7,7,11)$, we are done if the degree 3 generator is closed; otherwise, the degree 4 generator and its product with the degree 2 generator are closed and non-exact, giving $\dim H^* \geq 8$. 

For $(2,6,6:3,5,11,11)$, label by $x$, $y$, $u$ the generators of degree $2,3,5$ respectively. If $y$ is closed, it and $xy$ provide two independent cohomology classes and we have $\dim H^* \geq 8$. Suppose then that $dy = x^2$. If $u$ is closed, we are done as $x^3$ is non-exact; so assume $du = x^3$, in which case $\ker d \cap (\Lambda V^{\leq 5})^7$ is spanned by $x^2y - xu$. It follows from here that $\dim \ker d \cap V^6 \geq 1$, and the product of a non-zero class in this kernel with $x$ must be closed and non-exact since $V^7 = \{0\}$; thus $\dim H^* \geq 8$. 

Next, $(4,6,6:3,7,11,11)$ is verified by noting that the degree 3 and 4 generators must be closed and non-exact, along with at least one non-zero element in $V^6$.

For the case of $(2,2,4,6:3,3,5,7,11)$, note that if $d$ is not injective on $V^3$, we have $\dim H^* \geq 10$. Suppose then that $d$ is injective on $V^3$; denoting by $\{x,x'\}$, $\{y,y'\}$, $\{z\}$ bases of $V^2$, $V^3$, $V^4$ respectively, we have $dy = ax^2 + bx'^2 + cxx'$ and $dy' = a'x^2 + b'x'^2 + c'xx'$ for some independent $(a,b,c), (a',b',c') \in \QQ^3$. As in the case of $(2,2,4,4:3,3,5,7,7)$, it follows that $(\Lambda V^{\leq 3})^5 \overset{d}{\rightarrow} (\Lambda V)^6$ has at least 3--dimensional image (note $\dim (\Lambda V^{\leq 3})^5 = 4$). If the image is 4--dimensional, i.e. the kernel is trivial, the generator in degree 4 must be closed and hence we are done. So suppose the kernel is one--dimensional and that $dz$ is non-zero. We will show that this implies the existence of a closed non-zero element in the span of $\{yy', xz, x'z\}$; combined with the fact that every element in the $4$--dimensional space $(\Lambda V^{\leq 2})^6$ is closed, and $\dim  \im ((\Lambda V)^5 \overset{d}{\rightarrow} (\Lambda V)^6) \leq 4$, we will have $\dim H^6 \geq 1$ and hence $\dim H^* \geq 10$. Now, $dz = kxy + lxy' + mx'y + nx'y'$ being closed, where $k,l,m,n \in \QQ$ are not all zero, yields the equations $$ka + la' = 0, \, \, kc + lc' + ma + na' = 0, \, \, kb + lb' + mc + nc' = 0, \, \, mb + nb' = 0. $$ If $a \neq 0$, we can rearrange our basis for $V^3$ so that $dy = x^2 + bx'^2 + cxx'$, $dy' = b'x'^2 + c'xx'$. If furthermore $b' \neq 0$, we may take $b' = 1$ and $b=0$, yielding $d(yy' - xz - cx'z) = 0$. If $a\neq 0$ and $b ' = 0$, then upon change of basis for $V^3$ we have $dy = x^2 + bx'^2$, $dy' = xx'$, and we use the above four equations to conclude $b = 0$. Then $d(yy' - xz) = 0$. The case of $b \neq 0$ is analogous to the case of $a \neq 0$. Suppose now that $c \neq 0$ and $a,b = 0$; after change of basis we have $dy = xx'$, $dy' = a'x^2 + b'x'^2$. If $b'\neq 0$, upon change of basis we have $dy = xx'$ and $dy' = a'x^2 + x'^2$. Note however that the above four equations yield $n=0$ and hence $ma' = 0$. Since $m=0$ implies $k,l,m,n = 0$ (which we are assuming is not the case), we have $a' = 0$, and $d(yy' + x'z) = 0$. If $b' = 0$, we may assume $dy = xx'$ and $dy = x^2$, giving $d(yy' + x'z) = 0$. 

In the case of $(2,4,4,4:3,5,7,7,7)$, note that $\dim H^4 \geq 3$, and so $\dim H^* \geq 10$. For $(2,4,4,6:3,3,7,7,11)$, note that if $d$ vanishes on $V^3$ we have $\dim H^4 \geq 1$, and so $\dim H^* \geq 10$. Otherwise, $\dim \ker d \cap (\Lambda V^{\leq 3})^5 = 1$ so there is a non-zero $z \in \ker d \cap V^4$. Then $z$ and its product with a degree 2 generator are closed and non-exact as $V^5 = \{0\}$, giving us $\dim H^* \geq 10$. 

Now we consider $(2,2,4,4,4:3,3,3,7,7,7)$. Suppose first that $d$ is injective on $V^3$. Then, as in the case of $(2,2,4,4:3,3,3,7,7)$ in dimension 15, we have that $\ker d \cap (\Lambda V)^5$ is two--dimensional. Therefore, there is a non-zero element in $\ker d \cap V^4$, and the product of this element with any non-zero degree two element is closed and non-exact, giving $\dim H^* \geq 12$. If $V^3 \overset{d}{\rightarrow} (\Lambda V)^4$ has trivial or one--dimensional image, then we see $\dim H^* \geq 14$ by considering only $\Lambda V^{\leq 3}$ up to degree 4. Now suppose that the image of this $d$ is two--dimensional. We can choose bases $\{x,x'\}$, $\{y,y',y''\}$ of $V^2$ and $V^3$ such that $dy = ax^2 + bx'^2 + cxx'$, $dy' = a'x^2 + b'x'^2 + c'xx'$, $dy'' = 0$, where $(a,b,c)$ and $(a',b',c')$ are linearly independent. This implies the kernel of $d$ on the six--dimensional space $(\Lambda V^{\leq 3})^5$ has dimension two or three. If the dimension is two, then $\dim \ker d \cap V^4 \geq 1$ and so $\dim H^* \geq 12$ since $\dim H^3 = 1$ and $\dim H^4 \geq 2$. If the dimension is three, then either $d$ is not injective on $V^4$ in which case we are done, or we can choose a degree four generator $z$ so that $dz = xy''$. Then $[y''z]$ is a non-zero class in $H^7$, and we have $\dim H^* \geq 12$. 

For the remaining case of $(2,4:3,3,5,5,7)$ in dimension 19, if $d$ vanishes on $V^3$ we are done, so assume that for some bases $\{x\}$, $\{y, y' \}$, $\{z\}$ of $V^2$, $V^3$, $V^4$ we have $dy = x^2$, $dy' = 0$. If $dz = 0$ we have $\dim H^* \geq 8$, so suppose $dz = xy'$. Then $\ker d \cap (\Lambda V)^6$ is spanned by $x^3$ and $yy' - xz$, and since $d(xy) = x^3$ we conclude that there is a closed element in $(\Lambda V)^5$ with a non-zero term in $V^5$ (and so by minimality it is not exact), yielding $\dim H^* \geq 8$. 


In formal dimension 20, the only remaining homotopy rank type is $(2,4,4,4:3,3,3,7,7,7)$. If $d$ vanishes on $V^3$, we are done; otherwise, choose bases $\{x\}$, $\{y,y',y''\}$ of $V^2$ and $V^3$ such that $dy = x^2$, $dy' = dy'' = 0$. We see now that $\dim \ker d \cap (\Lambda V)^5 = 2$, and so $\dim \ker d \cap V^4 \geq 1$, giving $\dim H^*\geq 10$.

\end{proof}


 \end{document}